\pdfoutput=1
\documentclass[final]{siamltex}

\usepackage{amsfonts}
\usepackage[leqno]{amsmath}
\usepackage{ amssymb }
\usepackage{tikz}
\usepackage{algorithmic}
\usepackage{eqparbox}
\usepackage{url}

\newtheorem{algorithm}{Algorithm}[section]

\newcommand{\vect}{\operatorname{vec}}

\newcommand{\ten}[1]{\mathcal{#1}}  


\title{A QR Algorithm for Symmetric Tensors}

\author{Kim Batselier \and Ngai Wong \thanks{Department of Electrical and Electronic Engineering, The University of Hong Kong, Hong Kong}}

\begin{document}

\maketitle

\begin{abstract}
We extend the celebrated QR algorithm for matrices to symmetric tensors. The algorithm, named QR algorithm for symmetric tensors (QRST), exhibits similar properties to its matrix version, and allows the derivation of a shifted implementation with faster convergence. We further show that multiple tensor eigenpairs can be found from a local permutation heuristic which is effectively a tensor similarity transform, resulting in the permuted version of QRST called PQRST. Examples demonstrate the remarkable effectiveness of the proposed schemes for finding stable and unstable eigenpairs not found by previous tensor power methods.
\end{abstract}

\begin{keywords}
Symmetric tensor, QR algorithm, shift, eigenpair
\end{keywords}

\begin{AMS}
15A69,15A18
\end{AMS}

\pagestyle{myheadings}
\thispagestyle{plain}
\markboth{KIM BATSELIER AND NGAI WONG}{QRST}

\section{Introduction}
\label{sec:intro}
The study of tensors has become intensive in recent years, e.g.,~\cite{tensorreview,LandsbergBook2012,de2000best,kofidis2002best,koldamayo11siam,koldamayo14arxiv}. Symmetric tensors arise naturally in various engineering problems. They are especially important in the problem of blind identification of under-determined mixtures \cite{Comon2006b,Ferreol2005,Lathauwer2007,lim2014blind}. Applications are found in areas such as speech, mobile communications, biomedical engineering and chemometrics. Although in principle, the tensor eigenproblem can be fully solved by casting it into a system of multivariate polynomial equations and solving them, this approach is only feasible for small problems and the scalability hurdle quickly defies its practicality. For larger problems, numerical iterative schemes are necessary and the higher-order power methods (HOPMs) presented in~\cite{de2000best,kofidis2002best,koldamayo11siam,koldamayo14arxiv} are pioneering attempts in this pursuit. Nonetheless, HOPM and its variants, e.g., SS-HOPM~\cite{koldamayo11siam} and GEAP~\cite{koldamayo14arxiv}, suffer from the need of setting initial conditions and that only one stable eigenpair is computed at a time.

In the context of matrix eigenpair computation, the QR algorithm is much more powerful and widely adopted, yet its counterpart in the tensor scenario seems to be lacking. Interestingly, the existence of a tensor QR algorithm was questioned at least a decade ago~\cite{kilmer2004decomposing}, which we try to answer here. Specifically, the central idea of the symmetric QR algorithm is to progressively align the original matrix (tensor) onto its eigenvector basis yet preserving the particular matrix (tensor) structure,  viz. symmetry, by similarity transform. In that regard, the tensor nature does not preclude itself from having a tensor QR algorithm counterpart. The main contribution of this paper is an algorithm, called the \textbf{QR} algorithm for \textbf{S}ymmetric \textbf{T}ensors (QRST), that computes the real eigenpair(s) of a real $d$th-order $n$-dimensional symmetric tensor $\ten{A}\in \mathbb{R}^{n\times\cdots\times n}$. We show that QRST also features a shifted version to accelerate convergence. Moreover, a permutation heuristic is introduced, captured in the algorithm named permuted QRST (PQRST), to scramble the tensor entries to produce possibly more distinct eigenpairs. We emphasize that the current focus is essentially a demonstration of the feasibility of QR algorithm for tensors, rather than an effort to develop efficient QRST implementations which will definitely involve further numerical work and smart shift design. We believe the latter will come about when QRST has become recognized.

This paper is organized as follows. Section~\ref{sec:notations} reviews some tensor notions and operations, and defines symmetric tensors and real eigenpairs. Section~\ref{sec:QRST} proposes QRST and PQRST, together with a shift strategy and convergence analysis. Section~\ref{sec:examples} demonstrates the efficacy of (P)QRST through numerical examples, and Section~\ref{sec:conclusions} draws the conclusion.

\section{Tensor Notations and Conventions}
\label{sec:notations}
Here we present some tensor basics and notational conventions adopted throughout this work. The reader is referred to~\cite{tensorreview} for a comprehensive overview. A general $d$th-order or $d$-way tensor, assumed real throughout this paper, is a multidimensional array $\ten{A}\in\mathbb{R}^{n_1\times n_2\times\cdots\times n_d}$ that can be perceived as an extension of the matrix format to its general $d$th-order counterpart~\cite{tensorreview}. We use calligraphic font (e.g., $\ten{A}$) to denote a general ($d>2$) tensor and normal font (e.g., $A$) for the specific case of a matrix ($d=2$). The inner product between two tensors $\ten{A}, \ten{B}\in\mathbb{R}^{n_1\times n_2\times\cdots\times n_d}$ is defined as%
\begin{align}
\label{eqn:inner}
\left<\ten{A},\ten{B}\right>&=\sum_{i_1,i_2,\cdots,i_d}\ten{A}_{i_1 i_2 \cdots i_d}\ten{B}_{i_1 i_2 \cdots i_d}\nonumber\\
&=\vect(\ten{A})^T \vect(\ten{B}),
\end{align}
where $\vect(\circ)$ is the vectorization operator that stretches the tensor entries into a tall column vector. Specifically, for the index tuple $[i_1 i_2\cdots i_d]$, the convention is to treat $i_1$ as the fastest changing index while $i_d$ the slowest, so $\vect(\ten{A})$ will arrange from top to bottom, namely, $\ten{A}_{11\cdots 1}$, $\ten{A}_{21\cdots 1}$, $\cdots$, $\ten{A}_{12\cdots 1}$, $\cdots$, $\ten{A}_{n_1n_2\cdots n_d}$. The norm of a tensor is often taken to be the Frobenius norm $||\ten{A}||=||\ten{A}||_F=<\ten{A},\ten{A}>^{1/2}$.

The $k$-mode product of a tensor $\ten{A}\in\mathbb{R}^{n_1\times\cdots \times n_k\times\cdots\times n_d}$ with a matrix $U\in\mathbb{R}^{p_k\times n_k}$ is defined by~\cite{tensorreview}%
\begin{align}
\label{eqn:kmode}
\left(\ten{A}{_{\times_k}} U\right)_{i_1\cdots i_{k-1} j_k i_{k+1}\cdots i_d}=\sum_{i_k=1}^{n_k}  U_{j_k i_k} \ten{A}_{i_1\cdots i_k\cdots i_d},%
\end{align}
and $\ten{A}{_{\times_k}} U\in\mathbb{R}^{n_1\times\cdots \times n_{k-1}\times p_k\times n_{k+1}\times\cdots\times n_d}$. For distinct modes in a series of tensor-matrix multiplication, the ordering of the multiplication is immaterial, namely,
\begin{align}
\label{eqn:kmodediff}
\ten{A}{_{\times_p}} B{_{\times_q}} C=\ten{A}{_{\times_q}} C{_{\times_p}} B\mbox{~~}(p\neq q),%
\end{align}
whereas it matters for the same mode,
\begin{align}
\label{eqn:kmodesame}
\ten{A}{_{\times_p}} B{_{\times_p}} C=\ten{A}{_{\times_p}} (CB).%
\end{align}
This is in fact quite easy to recognize by referring to Figure~\ref{fig:kmode} where $\ten{A}{_{\times_1}} B{_{\times_1}} C{_{\times_2}} D{_{\times_3}} E=\ten{A}{_{\times_1}} (CB) {_{\times_2}} D{_{\times_3}} E$. As an example, for matrices with compatible dimensions, it can be readily shown that $A{_{\times_1}} B{_{\times_2}} C=BAC^T$. Apparently, $\ten{A}{_{\times_k}} I_{n_k}=\ten{A}$ for $k=1,\cdots,d$ whereby $I_m$ denotes the $m\times m$ identity matrix.
\begin{figure}[t]
\begin{center}
	\includegraphics[width=.65\textwidth]{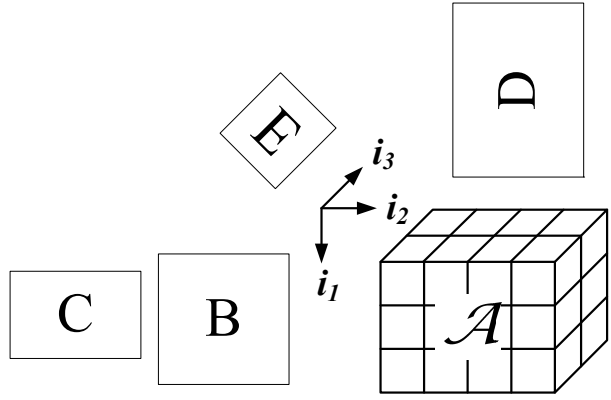}
	\end{center}
	\caption{Conceptual illustration of $\ten{A} {_{\times_1}} B {_{\times_1}} C {_{\times_2}} D {_{\times_3}} E$.}
	\label{fig:kmode}
\end{figure}

\subsection{Symmetric Tensors and Real Eigenpairs}
\label{subec:symeigen}
In this paper we focus on symmetric tensors. In particular, a $d$th-order symmetric tensor has all its dimensions being equal, namely, $n_1=\cdots =n_d=n$, and $\ten{A}_{i_1\ldots i_d} =  \ten{A}_{\pi(i_1\ldots i_d)}$, where $\pi(i_1\ldots i_d)$ is any permutation of the indices $i_1\ldots i_d$. To facilitate notation we further introduce some shorthand specific to symmetric tensors. First, we use $\mathbb{S}^{[d,n]}$ to denote the set of all real symmetric $d$th-order $n$-dimensional tensors. When a row vector $x^T\in \mathbb{R}^{1\times n}$ is to be multiplied onto $k$ modes of $\ten{A}\in \mathbb{S}^{[d,n]}$ (due to symmetry it can be assumed without loss of generality that all but the first $d-k$ modes are multiplied with $x^T$), we use the shorthand%
\begin{subequations}
\begin{align}
\ten{A}x & \triangleq \ten{A} {_{\times_d}} {x^T} \in\mathbb{S}^{[d-1,n]},\\
\ten{A}x^2 & \triangleq \ten{A} {_{\times_{d-1}}} {x^T} {_{\times_d}} {x^T} \in\mathbb{S}^{[d-2,n]},\cdots\\
\ten{A}x^{d-2} & \triangleq \ten{A} {_{\times_3}} {x^T} {_{\times_4}} {x^T} \cdots {_{\times_d}} x^T \in\mathbb{S}^{[2,n]}\in\mathbb{R}^{n\times n},\\
\ten{A}x^{d-1} & \triangleq \ten{A} {_{\times_2}} {x^T} {_{\times_3}} {x^T} \cdots {_{\times_d}}x^T \in\mathbb{S}^{[1,n]}=\mathbb{R}^n,\label{eqn:dbutonecontract}\\
\ten{A}x^{d} & \triangleq \ten{A} {_{\times_1}} {x^T} {_{\times_2}} {x^T} \cdots {_{\times_d}}x^T \in\mathbb{S}^{[0,n]}=\mathbb{R}.%
\end{align}
\end{subequations}
This is a \emph{contraction} process because the resulting tensors are progressively shrunk wherein we assume any higher order singleton dimensions ($n_i=1$) are ``squeezed'', e.g., $\ten{A}x^{d-1}$ is treated as an $n\times n$ matrix rather than its equivalent $n\times n\times 1 \cdots \times 1$ tensor. Besides contraction by a row vector, the notation generalizes to any matrix of appropriate dimensions. Suppose we have a matrix $V\in \mathbb{R}^{n\times p} (p>1)$, then%
\begin{align}
\ten{A}V^{d-1}\triangleq \ten{A} {_{\times 2}} {V^T} {_{\times 3}} {V^T} \cdots {_{\times d}} {V^T}\in \mathbb{R}^{n\times p\cdots \times p}.%
\end{align}

Now, the real eigenpair of a symmetric tensor $\ten{A}\in\mathbb{S}^{[d,n]}$, namely, $(\lambda,x)\in \mathbb{R}\times\mathbb{R}^n$, is defined as
\begin{align}
\ten{A}x^{d-1}=\lambda x,\mbox{~~}x^Tx=1.
\end{align}
This definition is called the (real) Z-eigenpair~\cite{qi05jsc} or $l^2$-eigenpair~\cite{Lim_singularvalues} which is of interest in this paper, apart from other differently defined eigenpairs, e.g.,~\cite{koldamayo14arxiv}. It follows that%
\begin{align}
x^T \ten{A}x^{d-1}=\ten{A}x^{d}=\lambda.
\end{align}
For $\ten{A}\in \mathbb{S}^{[d,n]}$, depending on whether $d$ is odd or even, we have the following result regarding eigenpairs that occur in pairs~\cite{koldamayo11siam}:
\begin{lemma}
For an even-order symmetric tensor, if $(\lambda, x)$ is an eigenpair, then so is $(\lambda, -x)$. For an odd-order symmetric tensor, if $(\lambda, x)$ is an eigenpair, then so is $(-\lambda, -x)$.
\label{lemma:eigenpairs}
\end{lemma}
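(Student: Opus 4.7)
The plan is to reduce both statements to a single homogeneity observation about the contraction $\ten{A}x^{d-1}$ and then split on the parity of $d$. Concretely, each $k$-mode product in the contraction is linear in the vector being multiplied, so multiplying $d-1$ copies of $-x$ into modes $2,\ldots,d$ pulls out a factor $(-1)^{d-1}$. That is, I would first establish
\begin{align*}
\ten{A}(-x)^{d-1} = (-1)^{d-1}\, \ten{A}x^{d-1},
\end{align*}
directly from the definition \eqref{eqn:dbutonecontract} together with the linearity of each $_{\times_k}$ operation.

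With that identity in hand, I would substitute the eigenpair relation $\ten{A}x^{d-1}=\lambda x$ and consider the two parities separately. If $d$ is even then $d-1$ is odd, so $\ten{A}(-x)^{d-1} = -\lambda x = \lambda(-x)$, which is exactly the eigenpair equation for $(\lambda,-x)$. If $d$ is odd then $d-1$ is even, so $\ten{A}(-x)^{d-1} = \lambda x = (-\lambda)(-x)$, which is the eigenpair equation for $(-\lambda,-x)$.

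Finally, I would verify the normalization constraint in both cases: $(-x)^T(-x) = x^Tx = 1$, so the new candidate satisfies the unit-norm requirement in the definition of a Z-eigenpair. I do not anticipate any real obstacle here; the only thing to be careful about is keeping track of the parity of $d-1$ (not $d$), since the exponent that appears in the sign factor is the number of copies of $x$ being contracted, which is $d-1$, and this is what flips the roles of the two cases. The whole argument is essentially a one-line consequence of multilinearity and can be written in a few lines.
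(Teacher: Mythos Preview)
Your proposal is correct and follows essentially the same argument as the paper: both use the homogeneity identity $\ten{A}(-x)^{d-1}=(-1)^{d-1}\ten{A}x^{d-1}$ and then split on the parity of $d$. Your explicit verification of the unit-norm constraint is a minor addition the paper omits, but otherwise the proofs coincide.
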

\begin{proof}
For an even $d$, we have $\ten{A}(-x)^{d-1}=-\ten{A}x^{d-1}=-\lambda x=\lambda (-x)$. For an odd $d$, we have $\ten{A}(-x)^{d-1}=\ten{A}x^{d-1}=\lambda x=-\lambda (-x)$.%
\end{proof}

Subsequently, we do not call two eigenpairs distinct if they follow the relationship in Lemma~\ref{lemma:eigenpairs}. For even-order symmetric tensors, there exists a symmetric \emph{identity tensor} counterpart $\ten{E}\in\mathbb{S}^{[d,n]}$ such that~\cite{qi05jsc,koldamayo11siam}%
\begin{align}
\ten{E}x^{d-1}=x~\mbox{~for all~}~x^Tx=1.
\label{eqn:identity}
\end{align}
The constraint that $||x||=1$ is necessary so it holds for all even $d\ge 2$. Obviously, $(1,x)$ is an eigenpair for $\ten{E}$. It is also obvious that such an identity tensor does not exist for an odd $d$ since contracting with $-x$ does not lead to itself. Consequently, for an even $d$, we have similar shifting of eigenvalues when we add a multiple of $\ten{E}$ to $\ten{A}$, i.e., if $(\lambda,x)$ is an eigenpair of $\ten{A}$, then $(\lambda+\alpha,x)$ is an eigenpair of $\ten{A}+\alpha \ten{E}$.

\subsection{Similar Tensors}
\label{subec:similar}
In this paper, we say that two symmetric tensors $\ten{A}, \ten{B}\in \mathbb{S}^{[d,n]}$ are \emph{similar}, denoted by $\ten{A} \sim \ten{B}$, if for some non-singular $P\in \mathbb{R}^{n\times n}$%
\begin{align}
\ten{B}=\ten{A}P^{d},
\end{align}
so that the \emph{inverse transform} of $\ten{B}$ by $P^{-1}$ will return $\ten{A}$, namely,
\begin{align}
\ten{B}P^{-d}=\left(\ten{A}P^{d}\right)P^{-d}=\ten{A} {_{\times 1}} (P^{-T}P^T) {_{\times 2}} (P^{-T}P^T) \cdots {_{\times d}} (P^{-T}P^T)=\ten{A}.
\end{align}
Suppose $\ten{\tilde{A}}\in\mathbb{S}^{[d,n]}$ is similar to $\ten{A}$ through a similarity transform by an orthogonal matrix $\underline{Q}$, i.e., $\underline{Q}^T \underline{Q}=I_n$ and $\ten{\tilde{A}}=\ten{A}\underline{Q}^d$. It follows that if $(\lambda, e_1)$ is an eigenpair of $\ten{\tilde{A}}$, then $(\lambda, \underline{q}_1)$ is an eigenpair of $\ten{A}$ where $\underline{q}_1$ denotes the first column of $\underline{Q}$. This is seen by%
\begin{align}
\ten{\tilde{A}}e_1^{d-1}=\lambda e_1\Leftrightarrow & \ten{A} {_{\times 1}}\underline{Q}^T {_{\times 2}} (e_1^T \underline{Q}^T) \cdots {_{\times d}}(e_1^T \underline{Q}^T)=\lambda e_1\nonumber\\
\Leftrightarrow & \ten{A} {_{\times 1}}\underline{Q}^T {_{\times 2}}(\underline{q}_1)^T \cdots {_{\times d}}(\underline{q}_1)^T =\lambda e_1\nonumber\\
\Leftrightarrow & \underline{Q}^T\ten{A}\underline{q}_1^{d-1}=\lambda e_1\nonumber\\
\Leftrightarrow & \ten{A}\underline{q}_1^{d-1}=\lambda \underline{q}_1.%
\label{eqn:e1q1}
\end{align}
In fact, we do not need to limit ourselves to $e_1$ and the above result easily generalizes to any $e_i$, which we present as lemma~\ref{lemma:eiqi}. 
\begin{lemma}
For a symmetric tensor $\ten{A}\in \mathbb{S}^{[d,n]}$, if $(\lambda, e_i)$ is an eigenpair of $\ten{A}\underline{Q}^d$, then $(\lambda, \underline{q}_i)$ is an eigenpair of $\ten{A}$ where $\underline{q}_i$ is the $i$th column of the orthogonal matrix $\underline{Q} \in \mathbb{R}^{n\times n}$.
\label{lemma:eiqi}
\end{lemma}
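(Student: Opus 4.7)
The plan is to follow exactly the chain of equivalences already displayed in (\ref{eqn:e1q1}) for the special case $i=1$, after observing that the argument there never used any specific property of the first coordinate vector beyond the identity $e_1^T \underline{Q}^T = \underline{q}_1^T$. The generalization therefore rests on the analogous identity $e_i^T \underline{Q}^T = \underline{q}_i^T$, which is just the statement that the $i$th row of $\underline{Q}^T$ equals the transpose of the $i$th column of $\underline{Q}$.

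Concretely, I would start from the hypothesis $(\ten{A}\underline{Q}^d)\, e_i^{d-1} = \lambda e_i$ and expand $\ten{A}\underline{Q}^d$ as $\ten{A}{_{\times_1}}\underline{Q}^T{_{\times_2}}\underline{Q}^T\cdots{_{\times_d}}\underline{Q}^T$ using the definition of similarity. Next, I would contract modes $2,\ldots,d$ with $e_i^T$ and invoke the same-mode rule (\ref{eqn:kmodesame}) to combine $e_i^T \underline{Q}^T$ into a single row vector, which by the observation above collapses to $\underline{q}_i^T$. This reduces the left-hand side to $\ten{A}{_{\times_1}}\underline{Q}^T{_{\times_2}}\underline{q}_i^T\cdots{_{\times_d}}\underline{q}_i^T = \underline{Q}^T \ten{A}\underline{q}_i^{d-1}$.

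Finally, I would apply $\underline{Q}$ to both sides of $\underline{Q}^T \ten{A}\underline{q}_i^{d-1} = \lambda e_i$ and use orthogonality $\underline{Q}\underline{Q}^T = I_n$ together with $\underline{Q}e_i = \underline{q}_i$ to obtain $\ten{A}\underline{q}_i^{d-1} = \lambda \underline{q}_i$, which is precisely the statement that $(\lambda,\underline{q}_i)$ is an eigenpair of $\ten{A}$.

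There is no real obstacle here; the lemma is an essentially cosmetic extension of the displayed computation (\ref{eqn:e1q1}). The only item worth spelling out carefully is the identity $e_i^T \underline{Q}^T = \underline{q}_i^T$, since everything else is then a mechanical re-run of the earlier derivation. One could alternatively package the proof as a corollary of the $i=1$ case by applying a coordinate permutation, but the direct calculation is shorter and stays within the notation already established.
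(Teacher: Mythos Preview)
Your proposal is correct and is precisely the approach the paper takes: its entire proof is the one-line remark that the result follows from~(\ref{eqn:e1q1}), and you have simply written out that generalization explicitly by replacing $e_1$ with $e_i$ via the identity $e_i^T\underline{Q}^T=\underline{q}_i^T$.
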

\begin{proof}
The proof easily follows from~(\ref{eqn:e1q1}).
\end{proof}

\section{QR Algorithm for Symmetric Tensors}
\label{sec:QRST}
The conventional QR algorithm for a symmetric matrix starts with an initial symmetric matrix $A_0$, and computes its QR factorization $Q_1R_1=A_0$. Next, the QR product is reversed to give $A_1=R_1Q_1=Q_1^TA_0 Q_1$. And in subsequent iterations, we have $Q_k R_k=A_{k-1}$, and $A_k=Q_k^TA_{k-1}Q_k=(Q_1\cdots Q_k)^T A_0 (Q_1\cdots Q_k)=\underline{Q}_k^T A_0 \underline{Q}_k$ where we define $\underline{Q}_k=Q_1Q_2\cdots Q_k$. Under some mild assumptions, $A_k$ will converge to a diagonal matrix holding the eigenvalues of $A_0$. In the following, we present QRST which computes multiple, stable and unstable~\cite{koldamayo11siam}, eigenpairs. Moreover, with a permutation strategy to scramble tensor entries, PQRST can produce possibly even more distinct eigenpairs, rendering such tensor QR algorithm much more effective compared to tensor power methods~\cite{koldamayo11siam,koldamayo14arxiv} which find only one stable eigenpair at a time. 

\subsection{QRST}
\label{subsec:QRST}
\begin{figure}[t]
\begin{center}
	\includegraphics[width=4.5in]{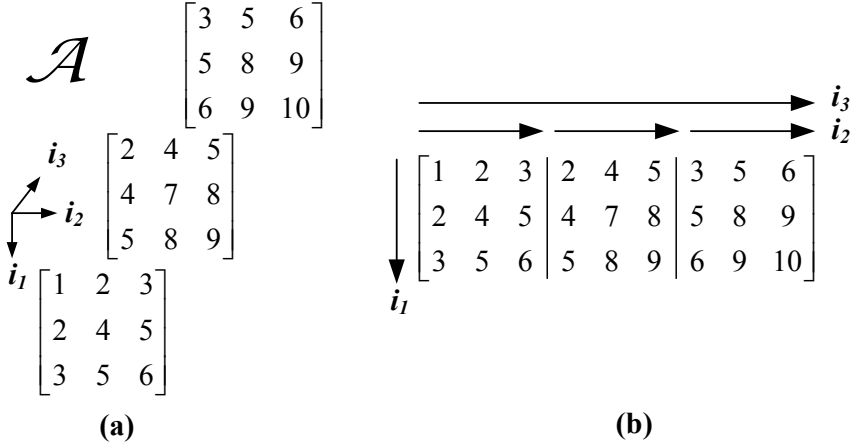}
	\end{center}
	\caption{(a) The example labeling tensor in $\mathbb{S}^{[3,3]}$ with sequential numbering of each unique entry. (b) Matricization about the first mode.}
	\label{fig:wongd3n3}
\end{figure}
The symmetric tensor $\ten{A}\in\mathbb{S}^{[3,3]}$ in Figure~\ref{fig:wongd3n3}(a) will be used as a running example. We call it a labeling matrix as it labels each distinct entry in a symmetric tensor sequentially with a natural number. Moreover, Figure~\ref{fig:wongd3n3}(b) is the \emph{matricization} of this labeling matrix along the first mode~\cite{tensorreview}. As a matter of fact, matricization about any mode of $\ten{A}$ gives rise to the same matrix due to symmetry. 

Now we formally propose the QRST algorithm, summarized in Algorithm~\ref{alg:qrst}. The core of QRST is a two-step iteration, remarked in Algorithm~\ref{alg:qrst} as Steps 1 and 2, for the $i$th slice where $i=1,\cdots,n$. In the Algorithm, it should be noted that Matlab convention is used whereby it can be recognized that $\ten{A}_{k}(:,i,i,\cdots,i)=\ten{A}_{k}e_i^{d-1}$ is a column and $\ten{A}_{k}(:,:,i,\cdots,i)=\ten{A}_{k}e_i^{d-2}$ is a (symmetric) square matrix.
\\
\\
\framebox[1.05\textwidth][l]{\begin{minipage}{\textwidth}
\begin{algorithm}{(Shifted) QRST}\\
\textit{\textbf{Input}}: {$\ten{A}_0\in\mathbb{S}^{[d,n]}$, tolerance $\tau$, maximum iteration $k_{max}$, shifts $s_k$ from shift scheme}\\
\textit{\textbf{Output}}: {eigenpairs $(\lambda_i,\underline{q}_i)$}
\begin{algorithmic}
\FOR[iterate through the $i$th square slice $\ten{A}_0(:,:,i,\cdots,i)$]{$i=1:n$}
\STATE{$\epsilon \gets ||\ten{A}_{0}e_i^{d-1}-e_i||_2 / ||\ten{A}_0(:,:,i,\cdots,i)||_2$}
\STATE{$\underline{Q}\gets I_n$} 
\STATE{iteration count $k \gets 1$}
\WHILE{$\epsilon >\tau$ or $k\le k_{max}$}
\STATE{Step 1: $Q_k R_k=\ten{A}_{k-1}(:,:,i,\cdots,i)+s_{k-1} I_n$~~~{\#~{\tt perform QR factorization}}}
\STATE{Step 2: $\ten{A}_k=\ten{A}_{k-1}Q_k^d$~~~{\#~{\tt similarity transform}}}
\STATE{$\underline{Q}\gets \underline{Q}Q_k$}
\STATE{$\epsilon \gets ||\ten{A}_{k}e_i^{d-1}-e_i||_2 / ||\ten{A}_k(:,:,i,\cdots,i)||_2$}
\ENDWHILE
\ENDFOR
\IF{converged}
\STATE{$\lambda_i \gets \ten{A}_k(i,i,i,\cdots,i)$}
\STATE{$\underline{q}_i\gets \underline{Q}(:,i)$}
\ENDIF
\end{algorithmic}
\label{alg:qrst}%
\end{algorithm}
\end{minipage}}
\\
\\
Referring to Figure~\ref{fig:wongd3n3}(b), if we take $\ten{A}_0=\ten{A}$, $i=1$ and start with $\ten{A}_0 e_1^{d-2}=\ten{A}_0 e_1$, the first (leftmost) $3\times 3$ ``slice'' of the $3\times 9$ matrix is extracted for QR factorization to get $Q_1 R_1$, and then all three modes of $\ten{A}_0$ are multiplied with $Q_1^T$ so the tensor symmetry is preserved. Using Algorithm~\ref{alg:qrst} with the heuristic shift described in Section~\ref{subsec:convergence}, this tensor QR algorithm converges in $152$ iterations when $\ten{A}_{k}e_1^{2}$ becomes a multiple of $e_1$ (i.e., all but the first entry in the $\ten{A}_{k}e_1^{2}$ column are numerically zero), resulting in%
\begin{align*}
\left[ {\begin{array}{*{20}c}
   -0.14 & 0.00 & 0.00 &\vline &  0.00 & -0.19 & 0.56 &\vline &  0.00 & 0.56 & -0.40  \\
   0.00 & -0.19 & 0.56 &\vline &  -0.19 & -20.69 & 11.56 &\vline &  0.56 & 11.56 & -5.56  \\
   0.00 & 0.56 & -0.40 &\vline &  0.56 & 11.56 & -5.56 &\vline &  -0.40 & -5.56 & 2.46  \\
 \end{array} } \right],%
\end{align*}
for which $e_1=[1~0~0]^T$ is obviously an eigenvector as its contraction along the $i_2$ and $i_3$ axes will extract the first column $[-0.14~0~0]^T$. Using~(\ref{eqn:e1q1}), this corresponds to the eigenpair $(-0.14,[-0.7854~0.6029~-0.1401]^T)$ of the labeling tensor. In fact, for different $i$'s, we shoot for different converged patterns in the corresponding square slices as depicted in Figure~\ref{fig:d3n3converged}, where it is obvious that $\ten{A}_ke_i^2=\lambda_i e_i$ for each $i$.%
\begin{figure}[t]
\begin{center}
	\includegraphics[width=.9\textwidth]{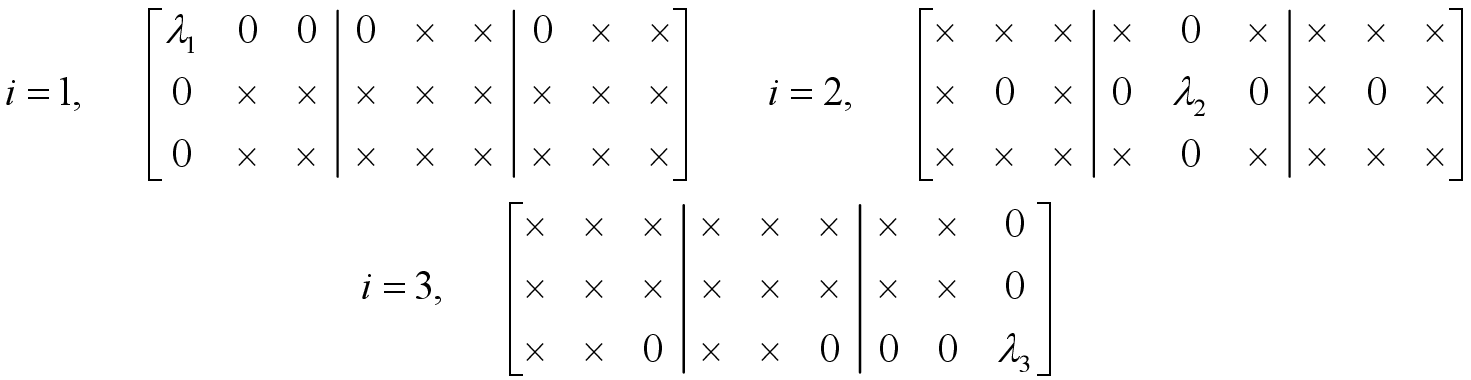}
	\end{center}
	\caption{The ``signatures'' of convergence in the example labeling tensor for $i=1,2,3$ where $\times$ denotes ``don't care''. Positions of $0$'s are obvious by comparing with Figure~\ref{fig:wongd3n3}(b).}
	\label{fig:d3n3converged}
\end{figure}

Since the core computation of QRST lies in the QR factorization ($\mathcal{O}(n^3)$) and the corresponding similarity transform ($\mathcal{O}(d n^2 n^{d-1})=\mathcal{O}(dn^{d+1})$), the overall complexity is therefore dominated by the similarity transform process. Note that this high complexity comes about whereby the tensor symmetry is not exploited. When a low-rank decomposition for $\ten{A}_0$ exists that represents $\ten{A}_0$ as a finite sum of $r$ rank-$1$ outer factors, the complexity of QRST can be largely reduced to $\mathcal{O}(drn^2)$. However, we refrain from elaborating as it is beyond our scope of verifying the idea of QRST. Such low-rank implementation will be presented separately in a sequel of this work.

\subsection{Convergence and Shift Selection}
\label{subsec:convergence}
We assume an initial symmetric $\ten{A}_0=\ten{A}\in\mathbb{S}^{[d,n]}$. The vector $e_i$ denotes the $i$th column of the identity matrix $I_n$ and we define $\underline{Q}_k\triangleq Q_1Q_2\cdots Q_k$ with $\underline{Q}_0=I_n$ and $\underline{Q}_1=Q_1$ etc. A tensor with higher order singleton dimensions is assumed squeezed, e.g., $\ten{A}_0 e_i^{d-2}$ is regarded as an $n\times n$ matrix rather than its equivalent $n\times n\times 1\cdots \times 1$ tensor. Also, we use $\underline{q}_{i,k}$ to denote the $i$th column in $\underline{Q}_k$ and $r_{ij,k}$ to stand for the $ij$-entry of $R_k$. It will be shown that the unshifted QRST operating on the matrix slice resulting from contracting all but the first two modes $\ten{A}_k$ by $e_i$, i.e., $\ten{A}_k e_i^{d-2}$, is in fact an orthogonal iteration. This can be better visualized by enumerating a few iterations of QRST:
\begin{subequations}
\begin{align}
Q_1 R_1 &= \ten{A}_0 e_i^{d-2},\\
\ten{A}_1 &=\ten{A}_0 Q_1^d=\ten{A}_0 \underline{Q}_1^d,\label{eqn:qrstex_1}\\
Q_2 R_2 &= \ten{A}_1 e_i^{d-2} = \ten{A}_0 {_{\times_1}} {\underline{Q}_1^T} {_{\times_2}} {\underline{Q}_1^T} {_{\times_3}} {\underline{q}_{i,1}^T}\cdots {_{\times_d}} {\underline{q}_{i,1}^T} =  \underline{Q}_1^T (\ten{A}_0 \underline{q}_{i,1}^{d-2}) \underline{Q}_1,\\
\ten{A}_2 &=\ten{A}_1 Q_2^d=(\ten{A}_0 \underline{Q}_1^d) Q_2^d=\ten{A}_0 (\underline{Q}_1 Q_2)^d=\ten{A}_0 \underline{Q}_2^d,\label{eqn:qrstex_2}\\
Q_3 R_3 &= \ten{A}_2 e_i^{d-2} = \ten{A}_0 {_{\times_1}} {\underline{Q}_2^T} {_{\times_2}} {\underline{Q}_2^T} {_{\times_3}} {\underline{q}_{i,2}^T}\cdots {_{\times_d}} {\underline{q}_{i,2}^T} =  \underline{Q}_2^T (\ten{A}_0 \underline{q}_{i,2}^{d-2}) \underline{Q}_2,\label{eqn:qrstex_3}%
\end{align}
\label{eqn:qrstex}%
\end{subequations}
etc. From~(\ref{eqn:qrstex}), we see that in general for $k=1,2,\cdots$,%
\begin{subequations}
\begin{align}
\ten{A}_{k-1} &=\ten{A}_0 \underline{Q}_{k-1}^{d},\label{eqn:orthoIter1}\\
Q_kR_k &= {\ten{A}_{k-1}} {e_i^{d-2}} = \underline{Q}_{k-1}^T (\ten{A}_0 \underline{q}_{i,k-1}^{d-2}) \underline{Q}_{k-1}.\label{eqn:orthoIter2}%
\end{align}
\end{subequations}
Multiplying $\underline{Q}_{k-1}$ to both sides of~(\ref{eqn:orthoIter2}) we get%
\begin{align}
&\underline{Q}_k R_k =  (\ten{A}_0 \underline{q}_{i,k-1}^{d-2}) \underline{Q}_{k-1} \Leftrightarrow\nonumber\\
&\left[ {\begin{array}{*{20}c}
   | & | & {} & |  \\
   {\underline{q}_{1,k} } & {\underline{q}_{2,k} } &  \cdots  & {\underline{q}_{n,k} }  \\
   | & | & {} & |  \\
 \end{array} } \right]\left[ {\begin{array}{*{20}c}
   {r_{11,k} } & {r_{12,k} } &  \cdots  & {r_{1n,k} }  \\
   {} & {r_{22,k} } & {} & {r_{2n,k} }  \\
   {} & {} &  \ddots  &  \vdots   \\
   {} & {} & {} & {r_{nn,k} }  \\
 \end{array} } \right] \nonumber\\
&~~~~~~~~~~~~~~= \left[ {\begin{array}{*{20}c}
   {} & {} & {}  \\
   {} & {\ten{A}_0\underline{q}_{i,k-1}^{d-2} } & {}  \\
   {} & {} & {}  \\
 \end{array} } \right]\left[ {\begin{array}{*{20}c}
   | & | & {} & |  \\
   {\underline{q}_{1,k - 1} } & {\underline{q}_{2,k - 1} } &  \cdots  & {\underline{q}_{n,k - 1} }  \\
   | & | & {} & |  \\
 \end{array} } \right].\label{eqn:qrstcolumns}%
\end{align}
It should now become obvious that~(\ref{eqn:qrstcolumns}) is the tensor generalization of the matrix orthogonal iteration~\cite{matrixcomputations} (also called simultaneous iteration or subspace iteration) for finding dominant invariant eigenspace of matrices\footnote{Unlike the matrix case with a constant (static) matrix $A$, the tensor counterpart comes with a \emph{dynamic} $\ten{A}_0\underline{q}_{i,k-1}^{d-2}$ that is updated in every pass with the $i$th column of $\underline{Q}_{k-1}$.}. This result comes at no surprise as the matrix QR algorithm is known to be equivalent to orthogonal iteration applied to a block column vector. However, our work confirms, for the first time, this is also true also for symmetric tensors.

Now, if we set $i=1$ and extract the first columns on both sides of~(\ref{eqn:qrstcolumns}), we get%
\begin{align}
\underline{q}_{1,k}r_{11,k} =(\ten{A}_0\underline{q}_{1,k-1}^{d-2})\underline{q}_{1,k-1}=\ten{A}_0 \underline{q}_{1,k}^{d-1}
\mbox{~~or~~}\underline{q}_{1,k} =\frac{\ten{A}_0 \underline{q}_{1,k}^{d-1}}{||\ten{A}_0 \underline{q}_{1,k}^{d-1}||},\label{eqn:1stcolumn}%
\end{align}
which is just the unshifted S-HOPM method~\cite{de2000best,kofidis2002best} cited as Algorithm~1 in~\cite{koldamayo11siam}. If we start QRST with $i=2$, then the first two columns of~(\ref{eqn:qrstcolumns}) proceed as%
\begin{subequations}
\begin{align}
\underline{q}_{1,k} r_{11,k}&=(\ten{A}_0\underline{q}_{2,k-1}^{d-2})\underline{q}_{1,k-1},\label{eqn:2ndcolumn1}\\
\underline{q}_{1,k}r_{12,k}+\underline{q}_{2,k}r_{22,k}& =\ten{A}_0\underline{q}_{2,k-1}^{d-1}.\label{eqn:2ndcolumn2}
\end{align}
\label{eqn:2ndcolumn}
\end{subequations}
Assuming convergence at $k=\infty$,%
\begin{subequations}
\begin{align}
\underline{q}_{1,\infty} r_{11,\infty}&=(\ten{A}_0\underline{q}_{2,\infty}^{d-2})\underline{q}_{1,\infty},\label{eqn:2ndcolumn1converged}\\
\underline{q}_{1,\infty}r_{12,\infty}+\underline{q}_{2,\infty}r_{22,\infty}& =(\ten{A}_0\underline{q}_{2,\infty}^{d-2})\underline{q}_{2,\infty}=\ten{A}_0\underline{q}_{2,\infty}^{d-1},\label{eqn:2ndcolumn2converged}%
\end{align}
\label{eqn:2ndcolumnconverged}%
\end{subequations}
from which we can easily check that $r_{12,\infty}=\underline{q}_{1,\infty}^T (\ten{A}_0\underline{q}_{2,\infty}^{d-2})\underline{q}_{2,\infty}=r_{11,\infty} \underline{q}_{1,\infty}^T \underline{q}_{2,\infty}=0$ due to~(\ref{eqn:2ndcolumn1converged}). In fact, for any $i$, if $\underline{Q}_k$ and $R_k$ converge to $\underline{Q}_{\infty}$ and $R_{\infty}$, respectively, then from~(\ref{eqn:qrstcolumns}), we must have
\begin{align}
R_{\infty}=\left[ {\begin{array}{*{20}c}
   {r_{11,\infty} } & 0 &  \cdots  & 0  \\
   0 & {r_{22,\infty} } & {} & \vdots  \\
   {\vdots} & {} &  \ddots  &  0   \\
   0 & {\cdots} & {0} & {r_{nn,\infty} }  \\
 \end{array} } \right]=\underline{Q}_{\infty}^T (\ten{A}_0\underline{q}_{i,\infty}^{d-2}) \underline{Q}_{\infty},\label{eqn:Rdiagonal}%
\end{align}
where $R_{\infty}$ is diagonal due to symmetry on the right of equality. Now if we pre-multiply $\underline{Q}_{\infty}$ onto both sides of~(\ref{eqn:Rdiagonal}) and post-multiply with $e_j$, then %
\begin{align}
\underline{q}_{j,\infty}r_{jj,\infty}=(\ten{A}_0\underline{q}_{i,\infty}^{d-2})\underline{q}_{j,\infty}\label{eqn:allconverged}
\end{align}
which boils down to the eigenpair $(r_{ii,\infty},\underline{q}_{i,\infty})$ when $i=j$. So in effect, (shifted) QRST finds more than just the conventional eigenpairs, but actually all ``local'' eigenpairs of the square matrix $(\ten{A}_0\underline{q}_{i,\infty}^{d-2})$, one of them happens to be the tensor eigenpair. This explains why in practice, (shifted) QRST can locate even unstable tensor eigenpairs as it is not restricted to the ``global'' convexity/concavity constraint imposed on the function $f(x)=\ten{A}_0x^d$ that always leads to positive/negative stable eigenpairs~\cite{koldamayo11siam}.

Till now, it can be seen that except for $i=1$, (shifted) QRST differs from the (S)S-HOPM algorithm presented in~\cite{koldamayo11siam,koldamayo14arxiv}, and convergence is generally NOT ensured for $i\neq 1$. However, inspired by~(\ref{eqn:2ndcolumn2converged}), the key for convergence starts with the convergence of the first column that reads%
\begin{align}
\underline{q}_{1,k} &=\frac{(\ten{A}_0 \underline{q}_{i,k-1}^{d-2})\underline{q}_{1,k-1}}{||(\ten{A}_0 \underline{q}_{i,k-1}^{d-2})\underline{q}_{1,k-1}||}.\label{eqn:1stcolumnconverged}
\end{align}
Consequently, applying results in~\cite{koldamayo11siam,koldamayo14arxiv}, convergence of~(\ref{eqn:1stcolumnconverged}) can be enforced locally around $\underline{q}_{1,k-1}$ provided the scalar function $f(x)=x^T(\ten{A}_0 \underline{q}_{i,k-1}^{d-2})x$ is convex (concave), which is ``localized'' in the sense that the contracted $(\ten{A}_0 \underline{q}_{i,k-1}^{d-2})$ is used instead of the whole $\ten{A}_0$. Then, for example, convexity can be achieved by adding a shift $s_{k-1}$ that makes $(\ten{A}_0 \underline{q}_{i,k-1}^{d-2})+s_{k-1} I_n$ positive semidefinite, namely,
\begin{align}
s_{k-1}& \ge -\lambda_{\min}\left(\ten{A}_{0} \underline{q}_{i,k-1}^{d-2}\right)\nonumber\\
& = -\lambda_{\min}\left(\underline{Q}_{k-1}^T(\ten{A}_{0} \underline{q}_{i,k-1}^{d-2})\underline{Q}_{k-1}\right) \nonumber\\
& = -\lambda_{\min}\left(\ten{A}_{k-1}e_i^{d-2}\right).\label{eqn:shiftedqrst}%
\end{align}
So the shift step in Algorithm~\ref{alg:qrst} is simply $s_{k-1}=-\lambda_{\min}(\ten{A}_{k-1}(:,:,i,\cdots,i))+\delta$ where $\delta$ is a small positive quantity which is set to be unity in our tests unless otherwise specified. This is analogous to the core step in SS-HOPM (Algorithm 2 of~\cite{koldamayo11siam}). In short, the shift strategy in~\cite{koldamayo11siam,koldamayo14arxiv} can be locally reused for QRST with the advantage that QRST may further compute the unstable eigenpairs alongside the positive/negative-stable ones, since a group of eigenpairs are generated at a time due to analogy of QRST to orthogonal iteration.

\subsection{PQRST}
\label{subsec:PQRST}
For a symmetric $\ten{A}_0\in \mathbb{S}^{[d,n]}$, QRST produces at most $n$ eigenpairs in one pass, and fewer than $n$ eigenpairs can result when QRST does not converge or when the same eigenpair is computed repeatedly. To increase the probability of generating distinct eigenpairs, and noting that eigenvectors of similar tensors are just a change of basis, we introduce the heuristic of permutation to scramble the entries in $\ten{A}_0$ as much as possible. Figure~\ref{fig:wongd3n3p} shows how the $\mathbb{S}^{[3,3]}$ labeling tensor is permuted by the $n!$ $(n=3)$ (orthogonal) permutation matrices.

\begin{figure}[ht]
\begin{center}
	\includegraphics[width=\textwidth]{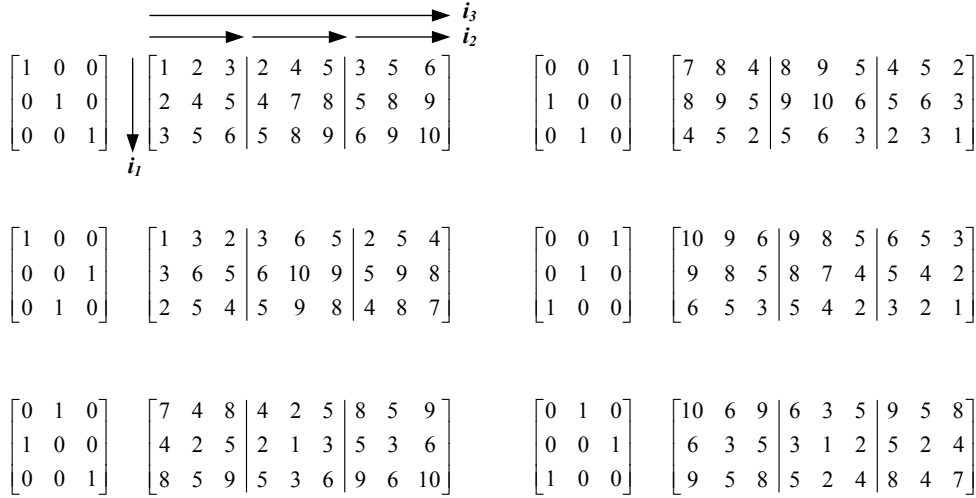}
	\end{center}
	\caption{Successive permutations of the example labeling tensor. Here the corresponding permutation matrices are put alongside the flattened tensors.}
	\label{fig:wongd3n3p}
\end{figure}
As such, we can run QRST on a tensor subject to different permutations. We call this permuted QRST or PQRST, summarized in Algorithm~\ref{alg:pqrst}.
\\
\\
\framebox[1.05\textwidth][l]{\begin{minipage}{\textwidth}
\begin{algorithm}{PQRST}\\
\textit{\textbf{Input}}: {$\ten{A}_0\in\mathbb{S}^{[d,n]}$, preconditioning orthogonal matrices in a cell $P\{p\}$, $p=1,\cdots,\bar{p}$}\\
\textit{\textbf{Output}}: {eigenpairs $(\lambda_i,\underline{q}_i)$}
\begin{algorithmic}
\FOR{$p=1:\bar{p}$}
\STATE{$P\gets P\{p\}$}
\STATE{$\ten{A}_p\gets \ten{A}_0 P^d$}
\STATE{call QRST Algorithm~\ref{alg:qrst} with $\ten{A}_p$ and collect distinct converged eigenpairs} 
\ENDFOR
\end{algorithmic}
\label{alg:pqrst}%
\end{algorithm}
\end{minipage}}\\
\\
Of course, PQRST requires running QRST $n!$ times which quickly becomes impractical for large $n$. In that case, we should limit the number of permutations by sampling across all possible permutations or use other heuristics for setting the initial pre-transformed $\ten{A}_0$, which is our on-going research.

\section{Numerical Examples}
\label{sec:examples}
In this section, we compare PQRST with SS-HOPM from \cite{koldamayo11siam}, and with its adaptive version described in \cite{koldamayo14asshopm}. For all the SS-HOPM runs the shift was chosen as $(m-1)\sum_{i_1,\ldots,i_m} |a_{i_1\cdots i_m}|$. We considered the iteration to have converged when $|\lambda_{k+1}-\lambda_k| \leq 10^{-15}$. If the eigenpairs are not known in advance then we also solve the corresponding polynomial system using the numerical method described in \cite{PhilippeDreesenThesis}. All numerical experiments were done on a quad-core desktop computer with 16 GB memory. Each core runs at 3.3 GHz.

\subsection{Example 1}
\label{subsec:Ex1}
As a first example we use the labeling tensor of Figure \ref{fig:wongd3n3}(a). Due to the odd order of the tensor we have that both $(\lambda,x)$ and $(-\lambda,-x)$ are eigenpairs. From solving the polynomial system we learn that there are 4 distinct eigenpairs, listed in Table \ref{tbl:roots1}. The stability of each of these eigenpairs was determined by checking the signs of the eigenvalues of the projected Hessian of the Lagrangian as described in \cite{koldamayo11siam}.
\begin{table}[t!]
\begin{center}
\caption{Distinct eigenpairs for labeling tensor of Figure \ref{fig:wongd3n3}(a). }   
\begin{tabular}{|c|c|c|}            
\hline                        
$\lambda$ & $x$ &  stability\\ [0.5ex]   
\hline                              
30.4557 & $\begin{pmatrix}0.37 & 0.61 & 0.70 \end{pmatrix} $ & negatively stable \\              
\hline 
0.4961 & $\begin{pmatrix}-0.80 & -0.34 & 0.50 \end{pmatrix}$ & positively stable\\               
\hline 
0.1688 & $\begin{pmatrix}0.86 & -0.44 & -0.23 \end{pmatrix}$ & positively stable\\              
\hline                              
0.1401 & $\begin{pmatrix}0.78 & -0.60 & 0.14 \end{pmatrix}$ & unstable\\              
\hline 
\end{tabular}
\label{tbl:roots1}          
\end{center}
\end{table}

100 runs of the SS-HOPM algorithm were performed, each with a different random initial vector. Choosing the random initial vectors from either a uniform or normal distribution had no effect whatsoever on the final result. The shift was fixed to the ``conservative" choice of $(m-1)\sum_{i_1,\ldots,i_m} |a_{i_1\cdots i_m}|=288$. All 100 runs converged to the eigenpair with the eigenvalue $30.4557$.  The median of number of iterations is 169. One run of 169 iterations took about 0.02 seconds. Next, 100 runs of the adaptive SS-HOPM algorithm were performed with a tolerance of $10^{-15}$. When the initial guess for $x$ was a random vector sampled from a uniform distribution, then all 100 runs consistently converged to the $30.4557$ eigenpair with a median number of 14 iterations. This clearly demonstrates the power of the adaptive shift in order to reduce the number of iterations. Choosing the initial vector $x$ from a normal distribution resulted in finding the 3 stable eigenpairs and 2 iterations not converging (Table \ref{tbl:asshopm1}). The error was computed as the average value of $||\ten{A}x^{m-1} - \lambda\,x ||_2$.
Similar to the SS-HOPM algorithm, the PQRST algorithm without shifting can only find the ``largest" eigenpair with a median of 13 iterations. This took about 0.082 seconds. PQRST with shift $\delta=1$ results in 18 runs that find all eigenpairs (Table \ref{tbl:pqrst1}). From the 18 runs, 8 did not converge.

\begin{table}[t!]
\begin{center}
\caption{Adaptive SS-HOPM for the labeling tensor of Figure \ref{fig:wongd3n3}(a). }   
\begin{tabular}{|c|c|c|c|c|}            
\hline                        
occ. & $\lambda$ & $x$ & median its & error \\ [0.5ex]   
\hline                              
58 & 30.4557 & $\begin{pmatrix}0.37 & 0.61 & 0.70 \end{pmatrix} $ & 15 & $4.95\times 10^{-15}$\\               
\hline 
23 & 0.4961 & $\begin{pmatrix}-0.80 & -0.34 & 0.50 \end{pmatrix}$  & 225 & $1.09\times 10^{-14}$\\                  
\hline 
17 & 0.1688 & $\begin{pmatrix}0.86 & -0.44 & -0.23 \end{pmatrix}$  & 701 & $4.85\times 10^{-14}$\\                  
\hline                              
\end{tabular}
\label{tbl:asshopm1}          
\end{center}
\end{table}

\begin{table}[t!]
\begin{center}
\caption{PQRST with shift for the labeling tensor of Figure \ref{fig:wongd3n3}(a). }   
\begin{tabular}{|c|c|c|c|c|}            
\hline                        
occ. & $\lambda$ & $x$ & median its & error \\ [0.5ex]   
\hline                              
2 & 30.4557 & $\begin{pmatrix}0.37 & 0.61 & 0.70 \end{pmatrix} $ & 18 & $3.10\times 10^{-14}$\\               
\hline 
2 & 0.4961 & $\begin{pmatrix}-0.80 & -0.34 & 0.50 \end{pmatrix}$  & 69.5 & $6.96\times 10^{-15}$\\                  
\hline 
2 & 0.1688 & $\begin{pmatrix}0.86 & -0.44 & -0.23 \end{pmatrix}$  & 246 & $4.74\times 10^{-15}$\\                  
\hline                              
4 & 0.1401 & $\begin{pmatrix}0.78 & -0.60 & 0.14 \end{pmatrix}$  & 201 & $4.24\times 10^{-15}$\\                  
\hline 
\end{tabular}
\label{tbl:pqrst1}          
\end{center}
\end{table}

\subsection{Example 2}
\label{subsec:Ex2}
For the second example we consider the 4th-order tensor from Example 3.5 in \cite{koldamayo11siam}. Solving the polynomial system shows that there are 11 distinct eigenpairs (Table \ref{tbl:roots2}).
\begin{table}[t!]
\begin{center}
\caption{Distinct eigenpairs for the tensor of Example 3.5 in \cite{koldamayo11siam}. }   
\begin{tabular}{|c|c|c|}            
\hline                        
$\lambda$ & $x$ & stability\\ [0.5ex]   
\hline                              
0.8893 & $\begin{pmatrix}0.67 & 0.25 & -0.70 \end{pmatrix} $ & negatively stable\\              
\hline 
0.8169 & $\begin{pmatrix}0.84 & -0.26 & 0.47 \end{pmatrix}$ & negatively stable\\               
\hline 
0.5105 & $\begin{pmatrix}0.36 & -0.78 & 0.51 \end{pmatrix}$ & unstable\\              
\hline                              
0.3633 & $\begin{pmatrix}0.27 & 0.64 & 0.72 \end{pmatrix}$ & negatively stable\\              
\hline 
0.2682 & $\begin{pmatrix}0.61 & 0.44 & 0.66 \end{pmatrix}$ & unstable\\              
\hline 
0.2628 & $\begin{pmatrix}0.13 & -0.44 & -0.89 \end{pmatrix}$ & unstable\\              
\hline 
0.2433 & $\begin{pmatrix}0.99 & 0.09 & -0.11 \end{pmatrix}$ & unstable\\              
\hline 
0.1735 & $\begin{pmatrix}0.33 & 0.91 & 0.25 \end{pmatrix}$ & unstable\\              
\hline 
-0.0451 & $\begin{pmatrix}0.78 & 0.61 & 0.12 \end{pmatrix}$ & positively stable\\              
\hline 
-0.5629 & $\begin{pmatrix}0.18 & -0.18 & 0.97 \end{pmatrix}$ & positively stable\\              
\hline 
-1.0954 & $\begin{pmatrix}0.59 & -0.75 & -0.30 \end{pmatrix}$ & positively stable\\              
\hline 
\end{tabular}
\label{tbl:roots2}          
\end{center}
\end{table}

Both a positive and negative shift are used for the (adaptive) SS-HOPM runs. Tables \ref{tbl:sshopm2} and \ref{tbl:asshopm2} list the results of applying 200 (100 positive shifts and 100 negative shifts) SS-HOPM and adaptive SS-HOPM runs to this tensor. Using the conservative shift of $37$ and the same convergence criterion as in Example \ref{subsec:Ex1} resulted in 27 runs not converging for the SS-HOPM algorithm. Again, the main strength of the adaptive SS-HOPM is the reduction of number of iterations required to converge. A faster convergence also implies a smaller error as observed in the numerical results. The same initial random vectors were used for both the SS-HOPM and adaptive SS-HOPM runs. Observe that in the 100 adaptive SS-HOPM runs with a negative shift the eigenpair with eigenvalue $-0.0451$ was never found. The total run time for 100 iterations of one adaptive SS-HOPM algorithm took about 0.025 seconds, which means that 100 runs take about 2.5 seconds to complete.

Similarly, the PQRST algorithm without shifting does not converge in any of the permutations. PQRST however retrieves all eigenpairs except one (Table \ref{tbl:pqrst2}). The total time to run PQRST for all permutations for 100 iterations took about 0.35 seconds.
\begin{table}[t!]
\begin{center}
\caption{SS-HOPM for the tensor of Example 3.5 in \cite{koldamayo11siam}. }   
\begin{tabular}{|c|c|c|c|c|}            
\hline                        
occ. & $\lambda$ & $x$ & median its & error \\ [0.5ex]   
\hline                              
2 & 0.8893 & $\begin{pmatrix}0.67 & 0.23 & -0.70 \end{pmatrix} $ & 886 & $2.07\times 10^{-8}$\\               
\hline 
30 & 0.8169 & $\begin{pmatrix}0.84 & -0.26 & 0.47 \end{pmatrix}$  & 644 & $9.15\times 10^{-11}$\\                  
\hline 
41 & 0.3633 & $\begin{pmatrix}0.27 & 0.64 & 0.71 \end{pmatrix}$  & 931 & $4.03\times 10^{-8}$\\                  
\hline                              
55 & -0.0451 & $\begin{pmatrix}-0.78 & -0.61 & -0.12 \end{pmatrix}$  & 680 & $1.81\times 10^{-10}$\\                  
\hline   
27 & -0.5629 & $\begin{pmatrix}-0.18 & 0.18 & -0.97 \end{pmatrix}$  & 386 & $1.37\times 10^{-14}$\\                  
\hline   
18 & -1.0954 & $\begin{pmatrix}-0.59 & 0.75 & 0.30 \end{pmatrix}$  & 357.5 & $1.56\times 10^{-14}$\\                  
\hline   
\end{tabular}
\label{tbl:sshopm2}          
\end{center}
\end{table}

\begin{table}[t!]
\begin{center}
\caption{Adaptive SS-HOPM for the tensor of Example 3.5 in \cite{koldamayo11siam}.}   
\begin{tabular}{|c|c|c|c|c|}            
\hline                        
occ. & $\lambda$ & $x$ & median its & error \\ [0.5ex]   
\hline                              
19 & 0.8893 & $\begin{pmatrix}0.67 & 0.23 & -0.70 \end{pmatrix} $ & 69 & $6.14\times 10^{-16}$\\               
\hline 
30 & 0.8169 & $\begin{pmatrix}0.84 & -0.26 & 0.47 \end{pmatrix}$  & 33 & $8.17\times 10^{-16}$\\                  
\hline 
51 & 0.3633 & $\begin{pmatrix}0.27 & 0.64 & 0.71 \end{pmatrix}$  & 26 & $7.22\times 10^{-16}$\\                  
\hline                              
78 & -0.5629 & $\begin{pmatrix}-0.18 & 0.18 & -0.97 \end{pmatrix}$  & 18 & $7.53\times 10^{-16}$\\                  
\hline   
22 & -1.0954 & $\begin{pmatrix}-0.59 & 0.75 & 0.30 \end{pmatrix}$  & 34 & $6.50\times 10^{-16}$\\                  
\hline                              
\end{tabular}
\label{tbl:asshopm2}          
\end{center}
\end{table}

\begin{table}[t!]
\begin{center}
\caption{PQRST with shift for the tensor of Example 3.5 in \cite{koldamayo11siam}. }   
\begin{tabular}{|c|c|c|c|c|}            
\hline                        
occ. & $\lambda$ & $x$ & median its & error \\ [0.5ex]   
\hline                        
4 & 0.8893 & $\begin{pmatrix}0.67 & 0.23 & -0.70 \end{pmatrix} $ & 96.5 & $1.55\times 10^{-15}$\\               
\hline 
2 & 0.8169 & $\begin{pmatrix}0.84 & -0.26 & 0.47 \end{pmatrix}$  & 70 & $1.37\times 10^{-15}$\\                  
\hline                             
1 & 0.5105 & $\begin{pmatrix}0.36 & -0.78 & 0.51 \end{pmatrix}$  & 33 & $1.34\times 10^{-15}$\\                  
\hline  
4 & 0.2682 & $\begin{pmatrix}0.61 & 0.44 & 0.66 \end{pmatrix}$  & 186.5 & $2.48\times 10^{-15}$\\                  
\hline  
2 & 0.2628 & $\begin{pmatrix}0.13 & -0.44 & -0.87 \end{pmatrix}$  & 46 & $7.80\times 10^{-16}$\\                  
\hline  
2 & 0.2433 & $\begin{pmatrix}0.99 & 0.09 & -0.11 \end{pmatrix}$  & 45.5 & $4.46\times 10^{-16}$\\                  
\hline  
1 & 0.1735 & $\begin{pmatrix}0.33 & 0.91 & 0.25 \end{pmatrix}$  & 45 & $5.85\times 10^{-16}$\\                  
\hline  
6 & -0.0451 & $\begin{pmatrix}-0.78 & -0.61 & -0.12 \end{pmatrix}$  & 34 & $4.20\times 10^{-16}$\\                  
\hline   
2 & -0.5629 & $\begin{pmatrix}-0.18 & 0.18 & -0.97 \end{pmatrix}$  & 19 & $4.60\times 10^{-16}$\\                  
\hline   
4 & -1.0954 & $\begin{pmatrix}-0.59 & 0.75 & 0.30 \end{pmatrix}$  & 25.5 & $1.59\times 10^{-15}$\\                  
\hline 
\end{tabular}
\label{tbl:pqrst2}          
\end{center}
\end{table}

\subsection{Example 3}
\label{subsec:Ex3}
For the third example we consider the 3rd-order tensor from Example 3.6 in \cite{koldamayo11siam}. Solving the polynomial system reveals that there are 4 stable and 3 unstable distinct eigenpairs (Table \ref{tbl:roots3}).

\begin{table}[t!]
\begin{center}
\caption{Distinct eigenpairs for the tensor of Example 3.6 in \cite{koldamayo11siam}. }   
\begin{tabular}{|c|c|c|}            
\hline                        
$\lambda$ & $x$ & stability\\ [0.5ex]   
\hline                              
0.8730 & $\begin{pmatrix}-0.39 & 0.72 & 0.57 \end{pmatrix} $ & negatively stable\\              
\hline 
0.4306 & $\begin{pmatrix}-0.72 & -0.12 & -0.68 \end{pmatrix}$ & positively stable\\               
\hline 
0.2294 & $\begin{pmatrix}-0.84 & 0.44 & -0.31 \end{pmatrix}$ & unstable\\              
\hline                              
0.0180 & $\begin{pmatrix}0.71 & 0.51 & -0.48 \end{pmatrix}$ & negatively stable\\              
\hline 
0.0033 & $\begin{pmatrix}0.45 & 0.77 & -0.45 \end{pmatrix}$ & unstable\\              
\hline 
0.0018 & $\begin{pmatrix}0.33 & 0.63 & -0.70 \end{pmatrix}$ & unstable\\              
\hline 
0.0006 & $\begin{pmatrix}0.29 & 0.73 & -0.61 \end{pmatrix}$ & positively stable\\              
\hline 
\end{tabular}
\label{tbl:roots3}          
\end{center}
\end{table}

The results of running SS-HOPM, adaptive SS-HOPM and PQRST on this tensor exhibit very similar features as in Example \ref{subsec:Ex1}. Just like in Example \ref{subsec:Ex1}, the SS-HOPM algorithm can only retrieve the eigenpair with the largest eigenvalue when the initial random vector is drawn from a uniform distribution. When the initial guess is drawn from a normal distribution, SS-HOPM retrieves 2 eigenpairs in 100 runs while the adaptive SS-HOPM retrieves the 4 stable eigenpairs. (Table \ref{tbl:asshopm3}). The shifted PQRST algorithm ($\delta=.5$) retrieves all 7 eigenpairs (Table \ref{tbl:pqrst3}).

\begin{table}[t!]
\begin{center}
\caption{Adaptive SS-HOPM for the tensor of Example 3.6 in \cite{koldamayo11siam}.}   
\begin{tabular}{|c|c|c|c|c|}            
\hline                        
occ. & $\lambda$ & $x$ & median its & error \\ [0.5ex]   
\hline                              
43 & 0.8730 & $\begin{pmatrix}-0.39 & 0.72 & 0.57 \end{pmatrix} $ & 13 & $4.00\times 10^{-16}$\\               
\hline 
35 & 0.4306 & $\begin{pmatrix}-0.72 & -0.12 & -0.68 \end{pmatrix}$  & 22 & $8.24\times 10^{-16}$\\                  
\hline 
16 & 0.0180 & $\begin{pmatrix}0.71 & 0.51 & -0.48 \end{pmatrix}$  & 43 & $1.26\times 10^{-15}$\\                  
\hline                              
6 & -0.0006 & $\begin{pmatrix}-0.29 & -0.73 & 0.61 \end{pmatrix}$  & 18.5 & $3.35\times 10^{-16}$\\                  
\hline   
\end{tabular}
\label{tbl:asshopm3}          
\end{center}
\end{table}

\begin{table}[t!]
\begin{center}
\caption{PQRST with shift for the tensor of Example 3.6 in \cite{koldamayo11siam}. }   
\begin{tabular}{|c|c|c|c|c|}            
\hline  
occ. & $\lambda$ & $x$ & median its & error \\ [0.5ex]   
\hline                        
1 & 0.8730 & $\begin{pmatrix}-0.39 & 0.72 & 0.57 \end{pmatrix} $  & 569 & $7.09\times 10^{-15}$\\               
\hline 
5 & 0.4306 & $\begin{pmatrix}-0.72 & -0.12 & -0.68 \end{pmatrix}$  & 56 & $1.79\times 10^{-15}$\\                  
\hline 
2 & 0.2294 & $\begin{pmatrix}-0.84 & 0.44 & -0.31 \end{pmatrix}$ & 511 & $1.65\times 10^{-15}$\\                     
\hline                              
2 & 0.0180 & $\begin{pmatrix}0.71 & 0.51 & -0.48 \end{pmatrix}$ & 873 & $5.58\times 10^{-16}$\\                           
\hline 
2 & 0.0033 & $\begin{pmatrix}0.45 & 0.77 & -0.45 \end{pmatrix}$ & 1328 & $4.03\times 10^{-16}$\\                             
\hline 
2 & 0.0018 & $\begin{pmatrix}0.33 & 0.63 & -0.70 \end{pmatrix}$ & 1177 & $4.76\times 10^{-16}$\\                              
\hline 
2 & 0.0006 & $\begin{pmatrix}0.29 & 0.73 & -0.61 \end{pmatrix}$ & 3929.5 & $3.63\times 10^{-16}$\\                   
\hline 
\end{tabular}
\label{tbl:pqrst3}          
\end{center}
\end{table}

\subsection{Example 4}
\label{subsec:Ex4}
As a last example we consider a random symmetric tensor in $\mathbb{S}^{[3,6]}$. Both SS-HOPM and the adaptive SS-HOPM are able to find 4 distinct eigenpairs in 100 runs. Since the difference in the results between SS-HOPM and the adaptive SS-HOPM lies in the total number of iterations we only list the results for the adaptive SS-HOPM method in Table \ref{tbl:asshopm4}.

\begin{table}[t!]
\begin{center}
\caption{Adaptive SS-HOPM for random symmetric tensor of order 3 and dimension 6.}   
\begin{tabular}{|c|c|c|c|c|}            
\hline                        
occ. & $\lambda$ & $x$ & median its & error \\ [0.5ex]   
\hline                              
23 & 2.4333 & $\begin{pmatrix}-0.64 & -0.36 & 0.57 & 0.33 & - 0.17 & -0.07 \end{pmatrix} $ & 79 & $1.87\times 10^{-14}$\\               
\hline 
23 & 4.3544 & $\begin{pmatrix}-0.07 & 0.30 & -0.38 & 0.48 & 0.68 & 0.26 \end{pmatrix}$  & 25 & $5.96\times 10^{-15}$\\                  
\hline 
33 & 3.4441 & $\begin{pmatrix}0.23 & -0.16 & 0.38 & -0.72 & 0.43 & 0.27 \end{pmatrix}$  & 52 & $1.02\times 10^{-14}$\\                  
\hline                              
21 & 2.8488 & $\begin{pmatrix}-0.02 & -0.26 & -0.30 & 0.31 & -0.52 & 0.69 \end{pmatrix}$  & 70 & $9.45\times 10^{-15}$\\                  
\hline   
\end{tabular}
\label{tbl:asshopm4}          
\end{center}
\end{table}

In contrast, PQRST with shift $\delta=1$ finds 7 distinct eigenpairs. The 3 additional eigenpairs are unstable.

\begin{table}[t!]
\begin{center}
\caption{PQRST for random symmetric tensor of order 3 and dimension 6.}   
\begin{tabular}{|c|c|c|c|c|}            
\hline                        
occ. & $\lambda$ & $x$ & median its & error \\ [0.5ex]   
\hline
24 & 1.3030 & $\begin{pmatrix}-0.47 & 0.84 & 0.03 & -0.07 &  0.05 & 0.27 \end{pmatrix} $ & 599 & $6.78\times 10^{-14}$\\
\hline                              
576 & 2.4333 & $\begin{pmatrix}-0.64 & -0.36 & 0.57 & 0.33 & - 0.17 & -0.07 \end{pmatrix} $ & 75 & $5.68\times 10^{-11}$\\               
\hline 
216 & 4.3544 & $\begin{pmatrix}-0.07 & 0.30 & -0.38 & 0.48 & 0.68 & 0.26 \end{pmatrix}$  & 148 & $2.66\times 10^{-13}$\\                  
\hline 
120 & 3.4441 & $\begin{pmatrix}0.23 & -0.16 & 0.38 & -0.72 & 0.43 & 0.27 \end{pmatrix}$  & 41 & $6.90\times 10^{-14}$\\                  
\hline                              
240 & 2.8488 & $\begin{pmatrix}-0.02 & -0.26 & -0.30 & 0.31 & -0.52 & 0.69 \end{pmatrix}$  & 48 & $6.63\times 10^{-14}$\\                  
\hline   
120 & 1.6652 & $\begin{pmatrix}-0.27 & 0.37 & 0.02 & -0.31 & -0.26 & 0.79 \end{pmatrix}$  & 773 & $2.66\times 10^{-13}$\\                  
\hline   
132 & 1.2643 & $\begin{pmatrix}-0.02 & -0.51 & -0.48 & -0.61 & 0.22 & -0.30 \end{pmatrix}$  & 150 & $2.65\times 10^{-13}$\\                  
\hline
\end{tabular}
\label{tbl:pqrst4}          
\end{center}
\end{table}

\section{Conclusions}
\label{sec:conclusions}
The QR algorithm for finding eigenpairs of a matrix has been extended to its symmetric tensor counterpart called QRST (QR algorithm for symmetric tensors). The QRST algorithm, like its matrix version, permits a shift to accelerate convergence. A preconditioned QRST algorithm, called PQRST, has further been proposed to efficiently locate multiple eigenpairs of a symmetric tensor by orienting the tensor into various directions. Numerical examples have verified the effectiveness of (P)QRST in locating stable and unstable eigenpairs not found by previous tensor power methods.

\bibliographystyle{siam}
\bibliography{references}

\end{document}